\newtheorem{thm}{Theorem}
\newtheorem{lemma}[thm]{Lemma}
\newtheorem{conjecture}{Conjecture}
\newtheorem{remark}[thm]{Remark}
\theoremstyle{remark}
\newtheorem{example}[thm]{Example}
\numberwithin{equation}{section}
\def\testb#1{\testb@i#1,,\@nil}%
\def\testb@i#1,#2,#3\@nil{%
  \draw[-, thick] (O) --++(#1);
  \ifx\relax#2\relax\else\testb@i#2,#3\@nil\fi}
\newcommand{\makediag}[1]{
    \coordinate (O) at (0,0); \coordinate (N) at (0,1);
    \coordinate (NE) at (0.98,0.98); \coordinate (E) at (1,0);
    \coordinate (SE) at (0.98,-0.98); \coordinate (S) at (0,-1);
    \coordinate (SW) at (-0.98,-0.98);\coordinate (W) at (-1,0);
    \coordinate (NW) at (-0.98,0.98); \coordinate (B1) at (1.2,1.2);
    \coordinate (B2) at (-1.2,-1.2);
    \testb{#1}
} 
\newcommand{\diagr}[1]{
  \begin{tikzpicture}[scale=0.15]\makediag{#1}\end{tikzpicture}
}
\newcommand{\mA}{\ensuremath{\mathcal{A}}}
\newcommand{\mS}{\ensuremath{\mathcal{S}}}
\newcommand{\mU}{\ensuremath{\mathcal{U}}}
\newcommand{\mB}{\ensuremath{\mathcal{B}}}
\newcommand{\mQ}{\ensuremath{\textsc{Q}}}
\newcommand{\mR}{\ensuremath{\textsc{R}}}
\newcommand{\mH}{\ensuremath{\textsc{H}}}
\newcommand{\veca}{\mathbf{a}}
\newcommand{\vecx}{\mathbf{x}}
\begin{document}

\title[Lattice Path Asymptotics]{A Combinatorial Understanding of Lattice Path Asymptotics}
\author{Samuel Johnson}
\address{Department of Mathematics\\ Simon Fraser University\\ 8888 University Drive\\ Burnaby, British Columbia\\ V5A1S6, Canada}
\email{samuelj@sfu.ca}

\author{Marni Mishna}
\address{Department of Mathematics\\ Simon Fraser University\\ 8888 University Drive\\ Burnaby, British Columbia\\ V5A1S6, Canada}
\email{mmishna@sfu.ca}
\thanks{Partial support by NSERC Discovery grant program, and  
mprime project Mathematics of Computer Algebra and Analysis (MOCAA)}

\author{Karen Yeats}
\address{Department of Mathematics\\ Simon Fraser University\\ 8888 University Drive\\ Burnaby, British Columbia\\ V5A1S6, Canada}
\email{karen\_yeats@sfu.ca}

\subjclass[2010]{Primary 05A16}

\date{}

\begin{abstract}
  We provide a combinatorial derivation of the exponential growth
  constant for counting sequences of lattice path models restricted to
  the quarter plane.  The values arise as bounds from analysis of
  related half planes models. We give explicit formulas, and the
  bounds are provably tight. The strategy is easily generalized to
  cones in higher dimensions, and has implications for random
  generation.
\end{abstract}

\maketitle

\section{Introduction}
Lattice path models have enjoyed a sustained popularity in mathematics
over the past century, owing in part to their simplicity and ease of
analysis, but also their wide applicability both in mathematics,
physics, and chemistry.  The basic enumerative question is to
determine the number of walks of a given length in a given model. The
past ten years have seen many interesting developments in the
asymptotic and exact enumeration of lattice models, with new
techniques coming from computer algebra, complex analysis and algebra.
A first approximation to this value is the \emph{exponential growth
  constant}, also called the \emph{connective constant}, which itself
carries combinatorial and probabilistic information. For example, it
is directly related to the limiting free energy in statistical
mechanical models.

A model is defined by the steps that are allowed, and the region to
which the walks are restricted (generally cones and
strips). Particular focus has been on small step models (where the
steps are a subset of $\{0,\pm 1\}^2$) restricted to~$\mathbb{Z}_{\geq
  0}^2$, and general approaches versus resolution of individual
cases. For example, three distinct strategies for asymptotic
enumeration have recently emerged. Fayolle and Raschel~\cite{FaRa12}
have determined expressions for the growth constant for small step
models using boundary value problem techniques. Recast as diagonals,
techniques of analytic combinatorics of several variables apply to
some of the models with D-finite generating functions ~\cite{MeMi16,
 MeWi16}. Finally, the important sub-class of excursions, that is, of
walks which return to the origin, are well explored via the
probability work of Denisov and Wachtel~\cite[Section
1.5]{DeWa15}. Bostan, Raschel and Salvy~\cite{BoRaSa14} made their
results explicit in the enumeration context. Most of these asymptotic
results are obtained with machinery which does not sustain a clear
underlying combinatorial picture.

Many of these results exclude \emph{singular\/} models: A two
dimensional model is singular if the support of the step set is
contained in a half plane. Many singular models are either trivial or
reduce to a problem in a lower dimension. The singular models are
considered in~\cite{MiRe09, MeMi14a}.

This paper provides a formula for an upper bound on the growth
constant of the counting sequence for lattice models restricted to a
convex cone, with intuitive combinatorial interpretations of intermediary
computations. Our formula is most explicit in the case of nonsingular
2-dimensional walks restricted to the first quadrant, but is valid
\emph{for all models}. Our general
strategy is based on the following basic observation:
\begin{quotation}\noindent\emph{In any lattice path model, the set of
    walks restricted to the first quadrant is a subset of the walks
    restricted to some half plane which contains that
    quadrant. Consequently, for any fixed length, the number of walks
    in that half plane is an upper bound for the number of walks in
    the quarter plane. }
\end{quotation}
Bounds on walks in half planes are readily computable, for example
using the results of Banderier and Flajolet~\cite{BaFl02}.
Remarkably, we are able to give \emph{tight\/} bounds on the growth
constant by considering all of the half planes that contain the
quarter plane. Furthermore, our bounds are insightfully tight in that
they give a single simple combinatorial interpretation of the multiple
cases treated by Fayolle and Raschel~\cite{FaRa12}.  Our one idea
\emph{unifies\/} their cases, which depend on various parameters of
the model. Furthermore, our approach also applies to singular models.
We use only the elementary calculus observation that a minimum of a
real valued continuous function~$f$ with domain~$D$ must occur either
at the boundary of~$D$ or at a critical point~$\tau \in D$ satisfying
$f'(\tau) = 0$. Our approach is combinatorial and readily adaptable to
models with larger steps, weighted steps, and to models in higher
dimensions. Furthermore, there are implications for random generation,
as we discuss in the conclusion.

In an earlier version of this article we conjectured that our bounds
were tight. This led to a proof by Garbit and Raschel~\cite{GaRa16}
that the bounds we find in the nonsingular case actually are
tight. Simultaneously, and independently, similar results were proved
by Duraj~\cite{Dura14}. Now, Garbit, Mustafa and
Raschel~\cite{GaMuRa16} conjecture that in some cases the
sub-exponential growth also matches that of the minimizing half-plane,
futher validating our interpretation.

\subsection{Conventions and notation}
Here, a \emph{lattice path model\/} is a combinatorial class denoted
by~$\mR(\mS)$ which is defined by a convex cone~$\mR$, and a finite
multiset of allowable steps (vectors), $\mS$. We focus on regions that
are half-planes through the origin, and the first quadrant
$\mathbb{Z}_{\geq 0}^2$.  We restrict~$\mS$ to be a finite subset of
$\mathbb{Z}^2$.  A walk of length $n$, say $w=w_0w_1 \dots w_n$, is a sequence of points
$w_i\in\mR$ such that $w_i-w_{i-1}\in \mS$ and for $i=1..n$.
We denote by $\mR(\mS)_n$ the subset of all walks of length~$n$ in
$\mR(\mS)$.

The central quantity we investigate is the number of walks with~$n$
steps in a given model, $|\mR(\mS)_n|$.  We write
$\mH=\mathbb{R}\times\mathbb{R}_{\geq 0}$ for the upper half plane
and~$\mQ=\mathbb{R}_{\geq 0}\times\mathbb{R}_{\geq 0}$ for the first
quadrant, and abbreviate $h_n = |\mH(\mS)_n|$ and $q_n = |\mQ(\mS)_n|$
when $\mS$ is clear.  In this work we focus on models in~$\mQ$.

A step set is said to be made of \emph{small steps\/} if $\mS\subseteq
\{0, \pm1\}^2\setminus\{(0,0)\}$ and in this case we use the compass
abbreviations $NW\equiv(-1,1), N\equiv(0,1), NE\equiv(1,1)$, etc.  We
might also consider larger regions, and more general step sets. We say
a model $\mR(\mS)$ is \emph{nontrivial\/} if it contains at least one
walk of positive length, and if for every boundary of~$\mR$, there
exists an unrestricted walk on $\mS$ which crosses that boundary at
some point other than an intersection of boundaries (i.e. in two
dimensions, not at the origin). The \emph{excursions\/} are the
sub-class consisting of walks which start and end at the origin. A
step set is said to be \emph{singular} if it is contained within a
single half-plane.

The (exponential) \emph{growth constant} of the sequence is defined as
the limit
\[
K_\mS=\lim_{n \rightarrow \infty} q_n^{1/n}.
\]
The limit exists by a classic argument: $q_n\cdot q_m\leq q_{n+m}$, so
by a Theorem of Hill~\cite{Hill48} the limit exists and hence (see
\cite[Theorem IV.7]{FlSe09}) it must be the reciprocal of the dominant
singularity of the generating function of the~$q_n$.
The present work determines bounds for the growth
constant, $K_\mS$, of the sequence~$\{q_n\}$.

Our strategy uses the simple relation that if $\mQ\subset \mR$, then
$\mQ(\mS)_n\subset \mR(\mS)_n$ and hence $q_n\leq|\mR(\mS)_n|$. This
is true for all~$n$, hence it is also true that
$\lim_n q_n^{1/n} \leq \lim_n |\mR(\mS)_n|^{1/n}$. It turns out, by
considering well chosen regions, we are able to perfectly bound the
growth constant~$K_\mS$.  Our preferred bounding regions are the half
planes
\[\mH_{\theta}=\{(x,y): x\sin\theta+y\cos\theta\geq 0 \}\] where~$\theta
\in [0,\pi/2]$.  We denote by $K_\mS(\theta)$ the growth constant of
the sequence of the number of walks of length $n$ in this region:
\[
K_\mS(\theta) = \lim_{n\rightarrow\infty} |\mH_\theta(\mS)_n|^{1/n}.
\]
Using the relation
\[
K_\mS \leq K_\mS(\theta) \quad \text{for all } 0\leq \theta \leq \pi/2,
\]
we can deduce bounds on $K_\mS$, by finding explicit expressions for
$K_\mS(\theta)$. 

\begin{figure}                 
\centering		
\subfloat[][$\mS=\diagr{N,E,S,W}$]{
{\includegraphics[width=.45\textwidth]{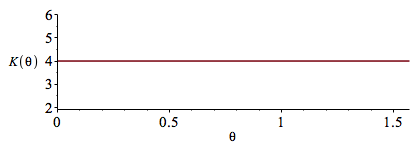}}
}
\qquad	
\subfloat[][Example~\ref{ex:irrat}:~$\mS=\diagr{N,SE,S,SW,W}$]{
{\includegraphics[width=.45\textwidth]{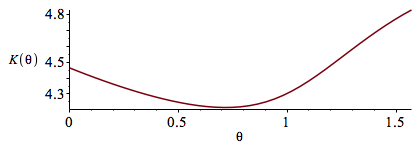}}
}\\	
\subfloat[][Example~\ref{ex:Symmetric}:~$\mS=\diagr{N,SW,S,SE}$]{ 
{\includegraphics[width=.45\textwidth]{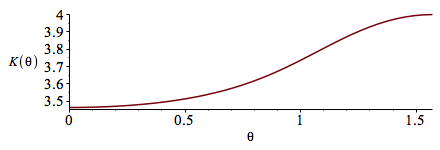}}
}%
\qquad
\subfloat[][$\mS=\diagr{E, N, S, W, SE}$]{ 
{\includegraphics[width=.45\textwidth]{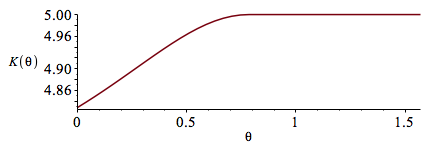}} 
}\\  
\caption{Graphs of $K_\mS(\theta)$ for various $\mS$}	
\label{fig:ub}     
\end{figure}  

Figure~\ref{fig:ub} illustrates this concept by considering several
models, and their exponential growth in several regions. 
\subsection{The main result and the plan of the paper}
Our main result, Theorem~\ref{thm:bestbound}, is an explicitly
computable bound on~$K_\mS$, and its combinatorial interpretation.  It
is determined by minimizing $K_\mS(\theta)$ as a function of $\theta$.
In several cases it is easily seen to be tight, by comparing to the
well-understood subclass of excursions.

We start at Lemma~\ref{lem:hpbij}, where we adapt the formulas of
Banderier and Flajolet to give a formula for~$K_\mS(\theta)$, for
given~$\mS$ and $\theta$. We then show that $K_\mS(\theta)$ defines a
continuous function in~$\theta$. Since each $\mH_\theta$ contains
$\mQ$, $K_{\mS}(\theta)$ is an upper bound on $K_\mS$ for any $\theta$
satisfying $0\leq \theta \leq \pi/2$. Finally, we determine the
location of the minimum upper bound in Theorem~\ref{thm:bestbound} by
basic calculus techniques, since $K_\mS(\theta)$ is an explicit
function of $\theta$.

In Section~\ref{sec:smallsteps}, we show that these results give
precisely the values found by Fayolle and Raschel for the nonsingular
models, demonstrating the bounds are tight. It is
Theorem~\ref{thm:equiv} which vindicates the description of this work
as a combinatorial interpretation of the formulas provided by Fayolle
and Raschel.

Our strategy applies to more general classes of models, for example,
multiple steps in the same direction, longer steps, and higher
dimensional models. The quantities we recover in these cases are,
transparently, upper bounds and they can be compared against
experimental data as a check for tightness. This led us to conjecture
that our approach gives tight upper bounds more generally and hence
actually finds the growth constants, which has subsequently been
proven for some particular cases in~\cite[Corollary 10]{GaRa16}
through probabilistic arguments.
\section{Walks in a half plane}
\label{sec:halfplane}
Models restricted to a half plane are well understood, and we recall
here some basic results. The set~$\mH(\mS)$ of walks restricted to the
upper half plane with steps from the finite multiset~$\mS$ is in
bijection with unidimensional walks with steps from the
multiset~$\mA=\{j: (i,j)\in \mS\}$ because horizontal movement does
not lead to any interaction with the boundary of~$\mH$.  We thus
consider half plane models as unidimensional models defined by sets of
real numbers. We retain the same notation: $\mH(\mA)_n=\{w_0w_1 \dots
w_n: w_i\geq 0, w_i-w_{i-1}\in \mA\}$.  The multiset~$\mA$ is said to
be \emph{nontrivial\/}, if it contains at least one positive and one
negative value.  There are two ways for a multiset~$\mA$ to be trivial
in a half plane: either~$\mA$ contains only non-negative elements and
we call it \emph{unrestricted}; or $\mA$ contains only non-positive
elements.  Unless otherwise stated, we assume that the models are
nontrivial.  It is worth noting that Theorem~\ref{thm:BF} still holds
in the unrestricted case.

The key ingredients for the enumeration are as follows. The
\emph{drift\/} of~$\mA$ is the sum $\delta(\mA) = \sum_{a \in
  \mA} a$ and the \emph{inventory\/} of~$\mA$ is~$A(u)=\sum_{a
  \in \mA}u^a$; notice that these are related by~$\delta(\mA) =
A'(1)$. 

\begin{thm}[Modified from Theorem~4 of Banderier and
  Flajolet~\cite{BaFl02}]
\label{thm:BF}
Let $\mA$ be a multiset of integers which defines a nontrivial
unidimensional walk model. Let~$A(u)=\sum_{a\in\mA} u^{a}$. The
number $h_n$ of walks of length~$n$ in $\mH(\mA)$ depends the
inventory on the sign of the drift $\delta(\mA)=A'(1)$ as follows:

\[
h_n\sim
\begin{cases}
\nu_0\, A(1)^n & \text{if } \delta(\mA)>0\\
 \nu_1\, A(1)^n\,n^{-1/2}  & \text{if } \delta(\mA)=0\\
 \nu_2\, A(\tau)^n\,n^{-3/2}  & \text{if } \delta(\mA)<0\\
\end{cases}.
\]
\noindent Here $\tau$ is the unique positive critical point of $A(u)$ and $\nu_0, \nu_1$ and $\nu_2$ are explicit, real constants.
\end{thm}%

\begin{proof}
  This follows directly from~\cite{BaFl02}. Remark the case of
  unrestricted sets of steps $\mA$, $h_n = |\mA|^n = A(1)^n$ so
  setting $\nu_2 = 1$ gives the result.
\end{proof}

Banderier and Flajolet prove these formulas by applying transfer
theorems to explicit generating functions, which they first
derive. The strategy requires integer steps however: The models with
real-valued steps are not necessarily representable by context-free
grammars, and the generating functions are not necessarily
algebraic. However, results on the growth constant appear in the
probability literature \cite{Done89}. It is rather straightforward to
deduce from the integer case by a limit argument which we present
next.

\begin{thm}
\label{thm:BF-Real}
Let $\mA$ be a multiset of real numbers which defines a nontrivial
unidimensional walk model. Let
$A(u)=\sum_{a\in\mA} u^{a}$. The  number $K_\mA=\lim_{n\rightarrow
  \infty} h_n^{1/n}$, where $h_n$ is of walks of
length~$n$ in $\mH(\mA)$, depends the inventory on the sign of the drift
$\delta(\mA)=A'(1)$ as follows:
\begin{equation}\label{eqn:HALF}
  K_\mA=\begin{cases}|\mA| & \text{if }\delta(\mA)\geq 0\\
    A(\tau) &\text{otherwise}.
\end{cases}
\end{equation}
Here $\tau$ is the unique positive critical point of $A(u)$.
\end{thm}%
First, remark that in the unrestricted case, as $h_n=|\mA|^n$, and the
drift is non-negative, hence the theorem is also true under
weaker hypotheses including this case.

Theorem~\ref{thm:BF} establishes this formula
for~$\mA\subset\mathbb{Z}$. The proof for other real nontrivial models
$\mA \subset \mathbb{R}$ is established from the integer base
case in three steps:
\begin{enumerate}
\item We show that if Equation~\eqref{eqn:HALF} holds for the multiset~$\mA$, then
  it is also true for the multiset~$r \mA=\{ra: a\in
  \mA\}$ when $r > 0$ in Lemma~\ref{thm:scaling};
\item We then deduce that the formula holds for multisets of rationals
  in Remark~\ref{rmk:BFrat};
\item Finally, we prove that the formula holds for multisets of reals by
  proving that a limiting construction of rational models gives the
  result. This is done in Section~\ref{sec:proofof}.
\end{enumerate}

We remark that the growth constant of the sequence counting the number
of excursions of length~$n$ in $\mH$ (in the integer case) can be
shown to be~$A(\tau)$ using a strategy similar to the proof of
Theorem \ref{thm:BF} \cite[Theorem 3]{BaFl02}. This does not translate
as smoothly in the real valued case, as this formula does not
adequately capture when the class is empty.

\subsection{Some facts about the inventory $A(u)$}
The first two steps of the proof of Theorem~\ref{thm:BF-Real} follow
from basic behaviour of~$A(u)$.
\begin{lemma}[Scaling Lemma]\label{thm:scaling}
  Let $\mA$ be a finite multiset of real numbers which is either
  unrestricted or nontrivial and let~$A(u)=\sum_{a\in\mA} u^{a}$.
  Suppose further that Equation~\eqref{eqn:HALF} holds for~$K_\mA$.
  For any~$r>0$, define $\mB = r\mA = \{r a : a \in \mA\}$. Then the
  growth constant $K_\mathcal{B}$ of the sequence
  $b_n=|\mH(\mB)_n|$~satisfies
\[
K_\mathcal{B}=\begin{cases}|\mB| & \text{if }\delta(\mB)\geq 0\\
B(\tau_{\mB}) &\text{otherwise}
\end{cases}.
\]
Here $B(u)=\sum_{b \in \mB} u^b$ and $\tau_\mB$ is the unique
positive critical point of $B(u)$.
\end{lemma}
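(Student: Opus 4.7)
The plan is to set up a length-preserving bijection between $\mathcal{H}(\mathcal{A})_n$ and $\mathcal{H}(\mathcal{B})_n$ and then separately verify that the right-hand side of the formula is invariant under the scaling $\mathcal{A} \mapsto r\mathcal{A}$. First I would observe that since $r>0$, the map sending a walk $w_1w_2\cdots w_n \in \mathcal{H}(\mathcal{A})_n$ to the walk $(rw_1)(rw_2)\cdots(rw_n)$ is a bijection onto $\mathcal{H}(\mathcal{B})_n$: the partial sums are just scaled by $r$, so the condition of staying weakly above $0$ is preserved in both directions. In particular $b_n = a_n$ for every $n$, so $K_\mathcal{B} = K_\mathcal{A}$.

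Next I would check the three ingredients in the right-hand side. Cardinality is preserved trivially, $|\mathcal{B}| = |\mathcal{A}|$. The drift transforms linearly, $\delta(\mathcal{B}) = \sum_{a\in\mathcal{A}} ra = r\,\delta(\mathcal{A})$, and since $r>0$ its sign matches that of $\delta(\mathcal{A})$. Thus the zero-drift and non-negative-drift branches of the formula are compatible, and in those cases $|\mathcal{B}| = |\mathcal{A}| = K_\mathcal{A} = K_\mathcal{B}$ as required.

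For the negative-drift branch I would use the identity $\chi_\mathcal{B}(u) = \sum_{a\in\mathcal{A}} u^{ra} = \chi_\mathcal{A}(u^r)$, valid for $u>0$. Differentiating gives $\chi_\mathcal{B}'(u) = r u^{r-1}\chi_\mathcal{A}'(u^r)$, so for $u>0$ the equation $\chi_\mathcal{B}'(u)=0$ is equivalent to $\chi_\mathcal{A}'(u^r)=0$. Uniqueness of $\tau_\mathcal{A}>0$ (part of the hypothesis that \eqref{eqn:HALF} holds for $\mathcal{A}$) then forces $\tau_\mathcal{B}^r = \tau_\mathcal{A}$, i.e.\ $\tau_\mathcal{B} = \tau_\mathcal{A}^{1/r}$, and in particular $\tau_\mathcal{B}$ is the unique positive root. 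Substituting back, $\chi_\mathcal{B}(\tau_\mathcal{B}) = \chi_\mathcal{A}(\tau_\mathcal{B}^r) = \chi_\mathcal{A}(\tau_\mathcal{A})$, and this equals $K_\mathcal{A} = K_\mathcal{B}$ by the assumed form of $K_\mathcal{A}$.

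There is essentially no obstacle here: the bijection is automatic from $r>0$, and the algebraic relation $\chi_\mathcal{B}(u)=\chi_\mathcal{A}(u^r)$ immediately transfers the critical point and its value. The only point requiring a word is uniqueness of the positive critical point for $\chi_\mathcal{B}$, and that is handed to us by the hypothesis on $\mathcal{A}$ via the substitution $v=u^r$, which is a bijection on $(0,\infty)$. The restriction $r>0$ (as opposed to $r\neq 0$ in the informal statement) is what lets us treat $u\mapsto u^r$ as an increasing bijection of the positive reals and keeps the half-plane condition unflipped.
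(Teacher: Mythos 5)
Your proposal is correct and follows the same route as the paper: a length-preserving bijection $w_i\mapsto rw_i$ between $\mH(\mA)_n$ and $\mH(\mB)_n$ giving $K_\mB=K_\mA$, together with the observations that the drift signs agree and that $\chi_\mB(\tau_\mB)=\chi_\mA(\tau_\mA)$ via $\chi_\mB(u)=\chi_\mA(u^r)$. The paper's proof merely asserts these three facts in one line; you have supplied the (correct) details.
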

\begin{proof}
  The lattice model~$\mH(\mB)$ is combinatorially isomorphic
  to~$\mH(\mA)$, so their growth constants are the
  same.  The formula follows because their drifts have the same sign,
  and since~$B(u) = A(u^r)$ and~$\tau_\mB =
  \tau_\mA^{1/r}$, thus~$B(\tau_\mB)=A(\tau_\mA)$.
\end{proof}

\begin{remark}\label{rmk:BFrat}
  For any finite multiset of rational numbers $\mB$, there is an $r>0$,
  for example, the least common multiple of the denominators in $\mB$, so that
  $\mA=r\mB$ is a set of integers.  By Theorem~\ref{thm:BF}, $K_\mA$
  satisfies Equation~\eqref{eqn:HALF}. Consequently, by
  Lemma~\ref{thm:scaling}, since $\mB= \frac{1}{r}\mA$, it is also
  true that $K_\mB$ satisfies Equation~\eqref{eqn:HALF}.
\end{remark}

\begin{lemma}[$A(u)$ is strictly convex at its minimum]\label{thm:chi}
  Given a finite multiset~$\mA$ of real numbers which defines a
  nontrivial unidimensional model, the real valued function~$A(u) =
  \sum_{a\in \mA}u^a$ has a unique positive critical point $\tau$. The
  function is minimized at this point, and is strictly convex on a
  neighbourhood of $\tau$. Furthermore, if $\delta(\mA)=0$, then the
  unique critical point occurs at $u=1$.
\end{lemma}

\begin{proof} 
  If there are no elements in the range $(0,1)$ then the result holds
  term by term.  Otherwise it is possible to scale so that all elements are in that
  range.

\end{proof}

\subsection{The continuity of $A(\tau_{\mA})$ as a function of $\mA$}
To prove Theorem~\ref{thm:BF-Real}, we consider the
function~$A(u)$, evaluated at its critical point. In particular we
view this as a function of the step lengths. In the following
lemma,~$\ell$ represents the number of elements in the step set.
\begin{lemma}\label{lem:smooth}
Let $F : \mathbb{R}^\ell\times \mathbb{R}_{> 0} \rightarrow
\mathbb{R}$ 
be the function defined by
\[ F( ( x_1, x_2,\dots, x_\ell), u) = F(\vecx,u) = \sum_{j=1}^\ell
u^{x_j}.\] Furthermore, let~$\veca\in\mathbb{R}^\ell$ have at least one
positive component and one negative component,  and denote
by~$\tau(\veca)$ the unique positive critical point of the map
$u \mapsto F(\veca,u)$.  There is a neighbourhood
$\mU$ of $\veca$ such that the function
\[
\kappa(\vecx)=F(\vecx, \tau(\vecx)),
\]
is continuous in $\vecx$ on~$\mU$.
\end{lemma}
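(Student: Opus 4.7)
The plan is to deduce the convexity and unique-critical-point claims directly from Lemma~\ref{thm:chi} applied to $\veca$ regarded as a multiset, and then to obtain the smoothness of $\kappa$ by applying the implicit function theorem to the equation $\partial_u F = 0$ around the point $(\veca,\tau(\veca))$.

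For the first assertion, I would observe that for fixed $\vecx = \veca$ the function $u \mapsto F(\veca,u)$ coincides with the inventory polynomial $\chi_{\mA}(u)$ of the multiset $\mA = \{a_1,\dots,a_\ell\}$. The hypothesis that $\veca$ has at least one positive and one negative component ensures $\mA$ is non-trivial, so Lemma~\ref{thm:chi} applies and yields convexity on $u>0$ together with the unique positive critical point $\tau(\veca)$ at which the minimum is attained.

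For smoothness, set $G(\vecx,u) = \partial_u F(\vecx,u) = \sum_{j=1}^\ell x_j u^{x_j-1}$. Since $u^{x_j} = e^{x_j \ln u}$, both $F$ and $G$ are $C^\infty$ on $\mathbb{R}^\ell \times \mathbb{R}_{>0}$, and $G(\veca,\tau(\veca)) = 0$ by definition of the critical point. To invoke the implicit function theorem I need $\partial_u G(\veca,\tau(\veca)) \neq 0$. This is cleanest in the variable $v = \ln u$: the identity $F = \sum_j e^{x_j v}$ has second $v$-derivative $\sum_j x_j^2 e^{x_j v}$, which is strictly positive whenever not all $x_j$ vanish. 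Translating back, a short calculation gives
\[
\partial_u G(\veca,\tau(\veca)) \;=\; \frac{1}{\tau(\veca)^2}\sum_{j=1}^\ell a_j^2 \, \tau(\veca)^{a_j} \;>\; 0.
\]
The implicit function theorem then produces an open neighbourhood $\mU$ of $\veca$ (which I may shrink so that every $\vecx \in \mU$ still has positive and negative components) and a smooth map $\vecx \mapsto \tau(\vecx)$ satisfying $G(\vecx,\tau(\vecx)) = 0$; by the uniqueness part of Lemma~\ref{thm:chi}, this map agrees with the critical-point function named in the statement. Finally, $\kappa(\vecx) = F(\vecx,\tau(\vecx))$ is smooth as a composition of smooth maps.

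I expect the only subtle point to be the strict positivity of $\partial_{uu}^2 F$ at the critical point. The direct expression $\sum_j x_j(x_j-1)u^{x_j-2}$ mixes signs and is not manifestly positive, which is why the change of variable $v = \ln u$ is essential: it rewrites $F$ as a sum of exponentials whose strict convexity in $v$ is transparent, and this transfers back to give the nondegeneracy hypothesis required by the implicit function theorem. Once this is in hand, the rest of the argument is entirely routine.
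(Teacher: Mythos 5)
Your proof follows the paper's own argument exactly: the paper's proof of this lemma is the one-line remark that the result is ``a simple consequence of the Implicit Function Theorem'' in view of Lemma~\ref{thm:chi}, which is precisely your route. You have merely supplied the details the paper omits, most usefully the verification via the substitution $v=\ln u$ that $\partial_u^2F(\veca,\tau(\veca))>0$, which is the nondegeneracy hypothesis needed to invoke the theorem.
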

\begin{proof}
  The results is a consequence of the implicit function theorem applied to $f(\vecx,u)=\frac{\partial F(\vecx, u)}{\partial u}$ around the
  point $\veca$.
%
%
\end{proof}

\subsection{Proof of Theorem~\ref{thm:BF-Real} in the case of real
  multisets}
\label{sec:proofof}
\begin{proof}[Proof of Theorem~\ref{thm:BF-Real}]
  Let $\mA \subset \mathbb{R}$ be a finite multiset of real numbers which is either
  unrestricted or nontrivial. To prove that
  $K_\mA$ satisfies Equation~\eqref{eqn:HALF} we build
  two sequences of rational step sets which converge to $\mA$. We then
  squeeze the growth constant~$K_\mA$ of~$h_n = |\mH(\mA)_n|$ into the
  desired form. 

  For each $a \in \mA$, let $\{a_i^+\}$
  and $\{a_i^-\}$ be rational sequences satisfying 
\[
0 \leq a_i^+ - a \leq \frac{1}{2^{i}} \qquad \mbox{and}\qquad  0 \leq a - a_i^- \leq \frac{1}{2^{i}}.
\]
We define two multisets
\[
\mA_i^+ =  \{ a_i^+ : a \in \mA \}\qquad\mbox{and} \qquad
\mA_i^- =  \{ a_i^- : a \in \mA \}.
\]
The drift is additive, thus for each $i$,
\[
\delta(\mA_i^-)=\sum_{a\in\mA} a_i^- \leq \sum_{a\in\mA} a\leq
\sum_{a\in\mA} a_i^+ =\delta(\mA_i^+).
\]

Note that Remark~\ref{rmk:BFrat} applies to both
$\mA_i^-$, and $\mA_i^+$ because both are multisets of rational numbers and
hence \eqref{eqn:HALF} is valid for the growth constants,
$K_{\mA_i^-}$ and $K_{\mA_i^+}$ respectively.

Furthermore, starting from a half-plane walk and making some steps slightly more positive can never talk the walk outside the half-plane so, by the construction of $\mA_i^+$ and $\mA_i^-$ we get a natural injection
\[
\mH(\mA_i^-) \xhookrightarrow \mH(\mA) \xhookrightarrow \mH(\mA_i^+)
\] 
and hence
\begin{equation}\label{eqn:k}
K_{\mA_i^-} \leq K_{\mA} \leq K_{\mA_i^+}.
\end{equation}
We claim
\[
\lim_{i \rightarrow \infty} K_{\mA_i^-} = K_\mA = \lim_{i \rightarrow \infty} K_{\mA_i^+},
\]
and $K_\mA$ is given by the formula of Theorem~\ref{thm:BF}. To prove this claim
observe the following.
In the negative drift case Lemma~\ref{lem:smooth} guarantees that the $ \lim_{i \rightarrow \infty} K_{\mA_i^\pm}$ converge and, from Lemma~\ref{thm:chi}, when $\delta(\mA) = 0$, $\tau_{\mA} = 1$ so a negative drift $\mA_i^-$ limits correctly up to a $0$ drift $\mA$.  In all cases $\mA_i^{\pm} \rightarrow \mA$ so $\delta(\mA_i^{\pm}) \rightarrow \delta(\mA)$.  Therefore by Remark~\ref{rmk:BFrat}
\[
 \lim_{i \rightarrow \infty} K_{\mA_i^\pm} = \begin{cases}|\mA| & \text{if }\delta(\mA)\geq 0\\
A(\tau_{\mA}) &\text{otherwise}\end{cases}
\]
so the squeeze theorem gives the result.
\end{proof}

\subsection{Other half planes}
\label{sec:otherhp}
Next, we extend to other half planes, each defined by an angle:
$\mH_\theta=\{(x,y): x\sin\theta+y\cos\theta\geq 0\}$. Note that for
$\theta \in [0,\pi/2)$ this region is equal to $\{(x,y):y\geq -mx \}$
where $m=\tan\theta$.  The upper half plane is given by $\mH_0$ and
the right half plane is $\mH_{\pi/2}$. In this latter case, we use the
extended reals, and write $m=\infty$.  The enumeration of lattice paths in~$\mH_\theta$ emulates the enumeration of lattice paths in~$\mH$.

\begin{figure}
  \subfloat[The step set $\mS=\{N, E, SW\}$ (in
  blue), and its projection onto the line $y=x/2$ (in purple, dashed) ]{
\begin{minipage}[t]{.45\textwidth}\center
\begin{tikzpicture}[scale=1.5]%
                \fill[fill=gray!10, rotate=297]  (-1.4,0) rectangle (1.3,1.3);
                \draw [step=1,thin,gray!30] (-1.5,-1.5) grid (1.1,1.1);
                \draw [color=black] (0,1.1) -- (0,-1.5);
                \draw [color=black] (1.1,0) -- (-1.5,0); 

                \draw [color=gray!60,line width=0.5mm] (-1.2,-0.6)--(-1,-1);  
                \draw [color=gray!60,line width=0.5mm] (0.4,0.2)--(0,1);
                \draw [color=gray!60,line width=0.5mm] (0.8,0.4)--(1,0);
                \draw [->, color=blue!80!gray,line width=1mm](0,0)--(0,1);
                 \draw [->, color=blue!60!black,line width=1mm](0,0)--(1,0);
                \draw [->, color=blue!80!white,line width=1mm] (0,0)--(-1,-1);

                \draw [color=red!80, thick] (-0.6,1.2) -- (0.6,-1.2);
                \draw [color=red!50, thick] (-1.6,-0.8) -- (1.2, 0.6);
                
                \draw [dashed,->,color=violet!80!black,line width=0.85mm] (0,0) -- (0.8, 0.4);
                \draw [dashed, ->,color=violet!80!red,line width=0.85mm] (0,0) -- (0.4, 0.2);
                \draw [dashed,->,color=violet!80!white,line width=0.85mm] (0,0) -- (-1.2, -0.6);
              \end{tikzpicture}

\end{minipage}}
\subfloat[Step set for the unidimensional model~$H(\{1,2,-3\})$]{
\begin{minipage}[t]{.45\textwidth}\center
\begin{tikzpicture}[scale=.75]%
	        \fill[fill=gray!10]  (-1.4,0) rectangle (3.4,2.5);
                \draw [step=1,thin,gray!30] (-1.4,-3.4) grid (3.4,2.4);
                \draw [color=red!50, thick] (0,-3.4) -- (0,2.4);

                \draw [->,color=violet!80!black,line width=1.75mm] (0,0) -- (0,2);
                \draw [->,color=violet!80!red,line width=1.75mm] (0,0) -- (0,1);
                \draw [->,color=violet!80!white,line width=1.75mm] (0,0) --
                (0,-3);
                \draw [color=red, thick] (-1.4,0) -- (3.4,0); 
 
\end{tikzpicture}

\end{minipage}}
\caption{Three representations for models of walks with steps from
  $\mS=\{N, E, SW\}$ restricted to the region~$\{y\geq -2x \}$,
  defined by $\theta=\arctan(2)$:
 {\sc (A)} $H_{\theta}(\mS)$, and its unidimensional projection and
 {\sc (B)}
 $H(\{1,2,3\})$, a scaling of $\mA(\theta)$.}

\label{fig:Smmodel}

\smallskip

{\color{gray}\hrule}
\end{figure}
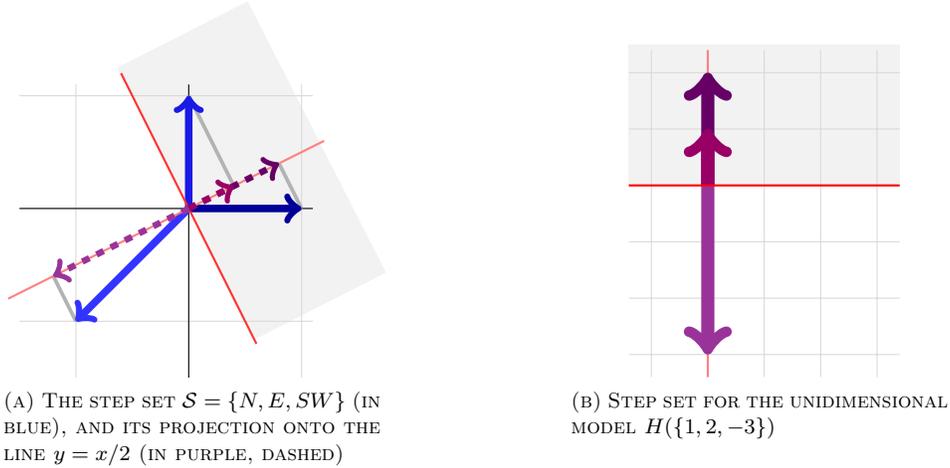
\begin{lemma}
\label{lem:hpbij} 
Let $\mS\subset\mathbb{Z}^2$ be a finite multiset and let
$\mH_\theta=\{(x,y): x\sin\theta+y\cos\theta\geq 0\}$ and
let~$\mA(\theta)=\{i\sin\theta +j\cos\theta: (i,j)\in\mS\}$. The
combinatorial class $\mH_{\theta}(\mS) $ is combinatorially
isomorphic to~$\mH(\mA(\theta))$.

Furthermore, if $\mS$ is a nontrivial or unrestricted step set for
$\mH_\theta$ then the growth constant $K_\mS(\theta)$ for the sequence
$|\mH_\theta(\mS)_n|$ is the value $K_{\mA(\theta)}$ determined by
Theorem~\ref{thm:BF-Real}.
\end{lemma}

\begin{proof}
  Here it suffices to consider the displacement of each step in the
  step set in the direction orthogonal to the boundary. See
  Figure~\ref{fig:Smmodel} for an example. The steps $(0,1)$ and
  $(1,0)$ respectively have displacement $\cos\theta$ and $\sin\theta$
  in this direction, the other steps follow by linearity.
This gives rise to a unidimensional
  half plane model with step set $\mA(\theta)$ to which
  Theorem~\ref{thm:BF-Real} applies if $\mA(\theta)$ is nontrivial or
  unrestricted, which occurs precisely when~$\mS$ is nontrivial or unrestricted in~$\mH_\theta$.
\end{proof}

\begin{example}[$\mS=\{N, E, SW\}$=\diagr{N, E, SW}] For
  any~$\theta\in[0,\pi/2]$, $\mQ(\mS)\subseteq \mH_\theta(\mS)$ and
  the following classes are  combinatorially isomorphic
  \[  \mH_\theta(\mS)\cong \mH(\{\cos\theta, \sin\theta,
  -\cos\theta-\sin\theta \}).\] When $\theta\neq\pi/2$, we can scale
  the model by~$\cos\theta^{-1}$. Let $m=\tan\theta, \theta\neq\pi/2$, then $\mH_\theta(\mS)\cong
  \mH(\{1, m, -m-1\})$. For $\theta=\pi/2$, remark~$\mH_{\pi/2}(\mS)\cong\mH(\{1,0,-1 \})$.
\end{example}

\section{Bounds for lattice path models in the quarter plane}
\label{sec:lattice-intro}
As we have already noted in the introduction, the exact enumeration of
quarter plane models has been well explored recently. In the case of
small steps, Bousquet-M\'elou and Mishna identified 79 non-isomorphic,
nontrivial small step models~\cite{BoMi10}. The associated generating
functions are known to be D-finite\footnote{The generating functions
  satisfy linear differential equations with polynomial coefficients.}
for 23 of these models.  After algebraic, the D-finite models are
easiest to enumerate asymptotically, and several approaches for this
have been successful~\cite{BoKa09, MeMi16, MeWi16, BoChHoKaPe16}.

The non-D-finite models have been more elusive. Fayolle and Raschel
have determined expressions for the growth constant for 74
models~\cite{FaRa12}. We summarize their formulas in
Section~\ref{sec:smallsteps}. Melczer and Mishna determined formulas
for the the remaining five models~\cite{MeMi14a}, and also for highly
symmetric models of arbitrary dimension~\cite{MeMi16}.

To describe these formulas we again need the drift of the model,
denoted $\delta(\mS):$
\[
\delta(\mS) = \sum_{s\in\mS}s=(\delta_x,\delta_y).
\]
Here we use a shorthand for classifying drift profiles. For each
component, we note if the drift is positive $(+)$, zero $(0)$ or
negative $(-)$. For example, if $\delta(\mS) \in \mathbb{R}_{>0}\times
\mathbb{R}_{\geq 0}$,
the drift profile is $(+,+/0)$.

The \emph{inventory\/} of the model is the Laurent polynomial $S(x,y)$
defined
\[
S(x,y)=\sum_{(i,j)\in\mS} x^i y^j.
\]
This is the two dimensional analog to~$A$, and it can be used to
express some useful quantities
\[
S(1,1)=|\mS|\qquad \delta_x = \left.\frac{\partial}{\partial x}S(x,1)\right|_{x=1}=P_x(1,1), 
\qquad \delta_y =  \left.\frac{\partial}{\partial y}S(1,y)\right|_{y=1}=P_y(1,1).
\]

The unidimensional case analysis depended upon the existence of a
positive critical point of the inventory. Its existence was a
consequence of the non-triviality of the model. The two dimensional
case is similar.

We use the result that in the case of a nontrivial, non singular model
there is a unique solution~$(\alpha, \beta) \in \mathbb{R}^2_{>0}$ to
the equation $P_x(x,y)=P_y(x,y)$ when~$\mS$ is nonsingular. 
This is a
straightforward consequence of equation manipulation. 
 We call this point the
\emph{critical point\/} of the inventory. It is straightforward to
show that there is no such~$(\alpha, \beta)$ with~$\alpha$ and~$\beta$
positive when~$\mS$ is singular.

\subsection{Bounds from half-plane models}
\label{sec:upperbound}
An upper bound on the growth constant of a quarter plane model can
always be determined by appealing to a half plane model using the same
steps restricted to lie in a region containing the first quadrant. In
this section we describe how to determine the half plane which gives
the best bound. The main result is Theorem~\ref{thm:bestbound}. It is
followed by examples of its application, its proof, and then in
Section~\ref{sec:smallsteps} a proof that, in the case of small steps,
the bound is the same as the exact formula of Fayolle and Raschel. In
some cases, this is easy to see, as the upper bound is the same as the
lower bound given by excursions. 

\begin{thm}[Main Theorem]
\label{thm:bestbound}
Let $\mS \subset \mathbb{Z}^2$ be a finite multiset that defines a
nontrivial quarter plane model~$\mQ(\mS)$.  Then, 
\begin{enumerate}
\item  the growth constant~$K_\mS=\lim_{n\rightarrow\infty} q_n^{1/n}$ satisfies
   \begin{equation*}
      K_\mS \leq K_\mS(\theta)\quad \text{ for all } \quad0\leq \theta
       \leq \pi/2,
    \end{equation*}
where $K_\mS(\theta)$ is the growth constant for the associated
rotated half plane model, as defined in Lemma~\ref{lem:hpbij};
\item the function $K_\mS(\theta)$ is continuous
as a function of $\theta$. 
\end{enumerate}
If~$\mS$ is non-singular, with inventory $S(x,y)$, 
then denote by
  $(\alpha,\beta)$ the unique solution in $\mathbb{R}^2_{> 0}$~to
\[
P_x(\alpha, \beta)=P_y(\alpha,\beta)=0.
\]
If~$\beta \neq 1$ and $\ln\alpha/\ln\beta\geq 0$, then let
$\theta^*=\arctan\frac{\ln \alpha}{\ln \beta}$. If
$\delta_x\sin\theta^*+\delta_y\cos\theta^* \leq 0$ then the minimum
value of $K_\mS(\theta)$ is attained at $\theta^*$,
and \[K_\mS(\theta^*) =S(\alpha, \beta).\] Otherwise, if any of
these conditions are not satisfied, or if $\mS$ is singular, the
minimum is attained at one of the endpoints of the range: either $0$
or $\pi/2$.
\end{thm}

Note that the minimum being obtained as described does not preclude it
also being attained elsewhere. Notably, $K_\mS(\theta)$ is constant in
the range $0\leq \theta \leq \pi/2$ when the drift profile is
non-negative in each component. We also note that the drift condition
$\delta_x\sin\theta^*+\delta_y\cos\theta^* \leq 0$ only comes into
play for the mixed drift profiles $(+, -)$ and $(-, +)$.  The basic idea is that $K_\mS(\theta)$ gets truncated at $|\mS|$ but otherwise its critical point behavious comes from that of $S(x,y)$ in the manner detailed below.  So the minimum of $K_\mS(\theta)$ comes either from $S(\alpha, \beta)$, from an endpoint, or from the constant truncated part, which then must also be achieved at an endpoint.

Before we prove this result, we consider three examples to develop some
intuition on the behaviour. Recall that to each pair of quarter plane
model $\mS$, and angle $\theta$, we associate the unidimensional step
set $\mA(\theta)=\{i\sin\theta +j\cos\theta: (i,j)\in\mS\}$. We define
the following shorthand for its inventory and critical point:
\[A_\theta \equiv \sum{a\in_{\mA(\theta)}} u^a\quad\text{and}\quad \tau_\theta \equiv
\tau_{\mA(\theta)}.
\]

\begin{example}[$\mS = \diagr{NW, N, NE, SE}$]\label{ex:singular} 
  This is an example of a nontrivial, singular model. There is no
  point $(\alpha, \beta)$ as in the theorem statement. The drift
  profile is $(+,+)$, and hence $K_\mS(\theta)$ is constant in the
  domain. We deduce $K_\mS \leq K_\mS(\theta)=|S|=4$. This value is
  tight, according to the formulas of Melczer and
  Mishna~\cite{MeMi14a}.
\end{example}

\begin{example}[$\mS=\diagr{N, SW, S, SE}$]\label{ex:Symmetric}  This is not a singular
  model, and the inventory
  $S(x,y)=y+\frac{1}{y}+\frac{x}{y}+\frac{1}{xy}$ has a unique
  critical point at $(1, \sqrt{3})$. The optimal angle given by
  Theorem~\ref{thm:bestbound}
  is\[\theta^*=\arctan(\ln(1)/\ln(\sqrt{3}))=0.\]

  Now, $\mQ(\mS)\subset\mH(\mS)$, and $\mH(\mS)\cong
  \mH(\mA(0))\cong\mH(\{1,-1,-1,-1\})$. This leads to the
  bound~$K_\mS\leq K_\mS(0)=A_0(\sqrt{3}) =S(1,\sqrt{3})=
  2\sqrt{3}$. This is tight in comparison to the formula given for
  $K_\mS$ by Fayolle and Raschel~\cite{FaRa12}. We shall see that this
  is a special case of Lemma~\ref{lem:rhoX}.
\end{example}

\begin{example}[$\mS = \diagr{N,W,SE,S,SW}$]\label{ex:irrat} We can
  numerically compute the critical point $(\alpha, \beta)$ of the
  inventory~$S(x,y) = y + \frac{x}{y} + \frac{1}{y} + \frac{1}{xy} +
  \frac{1}{x}$ as $(\alpha,\beta) \approx (1.6760,
  1.8091)$. Consequently, the minimising angle is $\theta^* \approx
  0.2281\pi \approx \arctan(0.8712)$.  The best bound is
  computed~$K_\mS(\theta^*) \approx 4.2148$. The exact value of the
  critical point is a tight bound. Note that the minimising half
  plane is not defined by the line perpendicular to the drift. The
  drift vector is $\delta(\mS) = (-1,-2)$, so the perpendicular has
  slope $-1/2$. We contrast this with slope given by the bound, at
  $-0.8712$.
\end{example}

Example~\ref{ex:irrat} demonstrates that the best half plane is not
defined by the perpendicular to the drift vector (a common
hypothesis). Rather, the slope is connected to the Cram\'er
transformation in probability~\cite{DeWa15}. Denisov and Wachtel
assign the probability
$p_{ij}=\frac{\alpha^i\beta^j}{S(\alpha,\beta)}$ to the step $(i,j)$
so that the drift of the weighted steps, given by
$\sum_{(i,j)\in\mS} p_{ij}$, is $(0,0)$, and then apply tools for
walks with no drift. It is clear here, perhaps, why their methods
apply only to nonsingular walks -- they require the existence of
$\alpha$ and $\beta$. Bostan, Raschel and Salvy in~\cite{BoRaSa14}
also discuss the combinatorics of this transform, and show that
~$S(\alpha, \beta)$ is the growth constant for excursions in the
quarter plane, which is a subclass of walks, hence $S(\alpha, \beta)$
is a lower bound for the growth constant. We offer the interpretation
of~$S(\alpha, \beta)$ as the exponential growth of walks restricted to
the half plane $H_\theta$ for the angle
$\theta = \arctan(\frac{\ln\alpha}{\ln\beta})$ (or the right half
plane when $\beta=1$).

\subsection{The function $K_\mS(\theta)$}
\label{sec:bestbound}
The function $K_\mS(\theta)$ is surprisingly simple, for fixed $\mS$.
In the case of half-plane walks, the growth constant for the counting
sequence for the walks with arbitrary endpoint is either the number of
steps, or given by the growth constant for excursions. The deciding
factor is the drift. 

In our model of changing half planes, the drift is given by the following smooth
function of $\theta$:
\[
\sum_{(i,j) \in \mS} (i\sin\theta + j\cos\theta) = \delta_x\sin\theta+\delta_y\cos\theta.
\]
Thus, $K_\mS(\theta)$ is either the number of steps, or given by
$A_\theta(\tau_\theta)$, and switches between them when the drift
is 0.  Several different possibilities are presented in
Figure~\ref{fig:ub}.  Roughly, the function $A_\theta(\tau_\theta)$
is $\pi$-periodic and attains a single maximum given by the number of
steps, and a single minimum. If that minimum is in the
interval~$[0, \pi/2]$ it is also the minimum of $K_\mS(\theta)$.

These functions are well behaved, and we can accurately predict the
point where $K_\mS(\theta)$ attains a minimum. First, we establish the
continuity of $K_\mS(\theta)$ in the next lemma, and then we determine
the complete set of critical points, and whether or not they are
maxima, or minima, in Lemma~\ref{lem:critpoints}.

\begin{lemma}\label{lem:continuity}
  Suppose~$\mS\subset\mathbb{Z}^2$ defines a nontrivial quarter plane
  model. Then $K_\mS(\theta)$ defines a continuous function on the
  domain $\theta\in[0, \pi/2]$.
\end{lemma}
\begin{proof}
The value of $K_\mS(\theta)$ is defined piecewise according to the
value of $\delta(\mA(\theta)) = \sum_{(i,j) \in \mS} (i\sin\theta + j\cos\theta)
= \delta_x\sin\theta+\delta_y\cos\theta$:
\[
K_\mS(\theta)=
\begin{cases} 
|\mS| & \text{if } \delta(\mA(\theta)) \geq 0\\
A_\theta(\tau_\theta) & \text{otherwise}.
\end{cases}
\]

The function $A_\theta(\tau_\theta)$ is continuous as a consequence of Lemma~\ref{lem:smooth} since $A_\theta(\tau_\theta) = \kappa(x(\mathcal{S}, \theta))$ where \[\kappa(\vecx)=\sum_{j=1}^\ell \tau(\vecx)^{x_j}.\]
The condition $\delta(\mA(\theta)) \geq 0$ defines an interval in $[0, \pi/2]$; consequently $K_\mS(\theta)$ is piecewise continuous.
Finally, as
$\delta(\mA(\theta))$ approaches $0$, $K_\mS(\theta)$ tends to
$|\mS|$, by Lemma~\ref{thm:chi} the function is continuous at points where
$\delta(\mA(\theta))=0$.  

\end{proof}

Next we pinpoint the location of the minima. 
\begin{lemma}\label{lem:critpoints}
  Let $\mS\subseteq\mathbb{Z}^2$ be the step set of a nontrivial
  quarterplane model.  When they exist let $(\alpha, \beta)$ be the 
  positive critical point of $S(x,y)$ and $\theta'$ be 
  $\arctan(\ln\alpha/\ln \beta)$.  If they exist and 
  $\delta(\mA(\theta')) \leq 0$ then $K_\mS(\theta)$ achieves its minimum
  value at $\theta'$.  Otherwise $K_\mS(\theta)$ achieves its minimum 
  value at an end point.

Specifically, the minimum value of~$K_\mS(\theta)$
  is achieved at $\theta^*$, determined as follows
\begin{enumerate}
\item if  $\delta(\mS)=(0/+, 0/+)$, then for all $\theta^*\in
  [0, \pi/2]$,  $K_\mS(\theta^*)=|S|$;
\item if~$\mS$ is singular then, $\theta^*\in \{0, \pi/2\}$;
\item if~$\mS$ is nonsingular then the inventory $S(x,y)$ has a unique
  critical point~$(\alpha, \beta) \in\mathbb{R}_{\geq 0}^2$.  If
  $\beta=1$ then $\theta^*=\pi/2$. If not, set
  $\theta'= \arctan(\ln \alpha/\ln \beta)$. If
  $\delta(\mA(\theta'))\geq 0$, then $\theta^*\in \{\pi/2,
  0\}$.
  Otherwise, $\theta^*=\theta'$.  In this final case, the growth
  constant coincides with that of excursions, that is,
  $K_\mS(\theta^*) =S(\alpha, \beta)$.
\end{enumerate}
\end{lemma}
\begin{proof}
The function is decided by the evolution of the drift. If the drift
profile is $(+/0, +/0)$, then   $K_\mS(\theta)=|S|$. Otherwise, 
the curve takes the value of $A_\theta(\tau_\theta)$ for some
sub-interval, and it is in this interval where the minimum occurs.  


It turns out that it is easier to work with the value that determines
the slope, $m=-\tan(\theta)$. To this end we define
$\overline{\mA}(m)=\{ im+j : (i,j)\in\mS \}$, and
$\overline{\mA}(\infty)=\{i: (i,j)\in\mS\}$. Remark that
$\overline{\mA}(m)$ is a scaled version of $\mA(\theta)$, scaled by
$\cos\theta^{-1}$. Thus, the two unidimensional models are are
combinatorially isomorphic and so the exponential growth of the two
models are the same. We find the minimizing slope, $m^*$.

Consider $G(u,m)=\sum_{(i,j)\in\mathcal{S}}u^{im+j}$, the inventory of
$\overline{A}(m)$. Now, $K_\mS(\theta)=G(u_m^*,m)$ where $u_m^*>0$
satisfies $G_u(u_m^*, m)=0$. We minimize $G(u_{m^*}, m)$ as a function of~$m\geq 0$
by solving for $(u,m)$ satisfying $G_m(u,m)=0$ and $G_u(u,m)=0$.  We
apply the chain rule, by first remarking $G(u,m)=S(u^m, u)$: 
\begin{eqnarray}
G_m(u,m)& =& \left. u^{m}\ln u P_x(x,y)
\right|_{\substack{x=u^m\quad\\y=u}}\label{eq:gm}\\
G_u(u,m)& =&
\left. mu^{m-1} P_x(x,y)\right|_{\substack{x=u^m\quad\\y=u}}
+\left.P_y(x,y)\right|_{\substack{x=u^m\quad\\y=u}}. \label{eq:gu}
\end{eqnarray}
There is a solution when~$u=1$ and~$G_u(u,m)=0$.  This is precisely
the case when $-\delta_x/\delta_y= m$, since $P_x(1,1)=\delta_x$ and
$P_y(1,1)=\delta_y$. In Lemma~\ref{lem:continuity} we have shown this
to be a maximum value, since~$G(1, m)=|\mS|$, the largest possible
value, at this point.

The only other possible solution is when $P_y(x,y)=P_x(x,y)=0$ itself has positive solution $(\alpha, \beta)$.  Thus either the minimum of $K_\mS(\theta)$ comes from such an $(\alpha, \beta)$ or it is at an end point.  It remains only to determine when each occurs.

If $\mS$ is singular, no appropriate $(\alpha, \beta)$ exists so the
minimum of $A_\theta(\tau_\theta)$, and hence $K_\mS(\theta)$, occurs at
a boundary.

If~$\mS$ is nonsingular, there is a unique positive $(\alpha,
\beta)$. 
If $\beta=1$, the minimum occurs when
$\theta=\pi/2$, so assume that $\beta\neq 1$. Then $(u^*, m^*)=(\beta,
\ln\alpha/\ln\beta)$ is a critical point of $G(u,m)$ in the desired
domain.
Now, it is possible that when this point occurs, in fact, the
half-plane model defined by $\theta$ is in a positive drift regime.
In this case the minimum of $K_\mS(\theta)$ is at a boundary.  Assume
otherwise that the half-plane models near the critical point have
negative drift.

In this cse, $A_\theta(\tau_\theta)$ has a minimum at the angle $\theta^*$ corresponding to $m^*$.  For any fixed $m$
near $m^*$, the function $G(u,m)$ is convex as a function of $u$. For fixed $m$ near $m^*$, the minimum of $G(u,m)$ as a
function of $u$ is, by definition, $A_\theta(\tau_\theta)$ for the angle
corresponding to $m$.  Consequently in a sufficiently small
neighbourhood of $(u^*, m^*)$, $G(u,m)$ is convex both in $u$ and in
$m$, so $G(u,m)$ is convex as a two variable function at $(u^*, m^*)$.
Therefore this $(u^*, m^*)$ is a minimum.

Therefore when the half-plane drift at the critical point is negative,
the minimum of both $A_\theta(\tau_\theta)$ and $K_\mS(\theta)$
occurs at $\theta^*$.
In this case, we compute the
growth factor by evaluating at the critical point. That is,
\[
K_\mS(\theta^*)= G(u^*, m^*) = S((u^*)^{m^*}, u^*) = S(\alpha, \beta).
\] 
\end{proof}
Now, we put these ideas together.
\begin{proof}[Proof of Theorem~\ref{thm:bestbound}]
  Since $\mS$ is a nontrivial model, then $A_\theta$ is either a
  nontrivial or an unrestricted model for all
  $\theta\in[0,\pi/2]$. For any $\theta\in[0,\pi/2]$, $q_n \leq
  |\mH{(\mA(\theta))}_n|$ for any $n$, and thus the growth constants
  of the sequence satisfy $K_\mS\leq K_\mS(\theta)$.  The latter is
  minimized at the stated $\theta^*$ by~Lemma~\ref{lem:critpoints}.
\end{proof}

\section{The case of small steps: a different approach}
\label{sec:smallsteps}
\subsection{The work of Fayolle and Raschel}
Fayolle and Raschel~\cite[Remark 4.9]{FaRa12} describe the location of
the dominant singularity in the generating function for nonsingular
small step quarter plane models. Their formula depends on the drift
$\delta(\mS) = (\delta_x, \delta_y)$, along with another parameter of
the model called the \emph{covariance}.  We do not use this parameter,
except to compare to their formulas. The covariance of a step set,
denoted $\gamma(\mS)$ is defined as
\[
\gamma(\mS) =  \left.\frac{\partial^2}{\partial x\partial y}S(x,y)\right|_{(x,y)=(1,1)}- \delta_x\delta_y.
\]
In the case of small steps the inventory
always has the form
\[
   S(x,y) = a(x)y + b(x) + c(x)y^{-1} = \tilde{a}(y)x + \tilde{b}(y) + \tilde{c}(y)x^{-1}.
\]
They prove that there are four possible values for~$K_\mS$:
\begin{equation}\label{eqn:rhoX}
\begin{gathered}
|\mS|,\qquad \rho_0^{-1}\equiv S(\alpha,\beta)\\
\rho_Y^{-1}\equiv {b(1) + 2\sqrt{a(1)c(1)}},\qquad 
\rho_X^{-1}\equiv {\tilde{b}(1) +2\sqrt{\tilde{a}(1)\tilde{c}(1)}}.
\end{gathered}
\end{equation}
As before, $(\alpha,\beta)$ is the unique positive solution in
$\mathbb{R}_{>0}^2$ satisfying
\[
 P_x(\alpha,\beta) = 0 \qquad P_y(\alpha,\beta) = 0.
\]
Furthermore, in their Remark 4.9 they determine conditions on the
sign of
~$\delta(\mS)$ and $\gamma$ which decide which of the four values is
correct for a given nonsingular model. 

These results have some natural interpretations. If $\delta(\mS)$ is
non-negative in both components then the growth constant is as for
unrestricted walks.  If $\delta(\mS)$ is positive in the first
component, and negative in the second, then growth constant is the
same as the walks that remain in the upper half plane.  The value
$S(\alpha, \beta)$ is the growth constant for excursions~\cite{DeWa15,
  BoRaSa14}. If $\delta(\mS)$ is negative in both components then the
growth constant is the same as the growth constant for excursions in
the region. This mirrors the behaviour in the case of unidimensional
walks.  The specific formulas they obtain are simply the result of their cases picking out when $K_\mS(\theta)$ is minimized at an end point or the critial point.
In this way, we are able \emph{unify the six cases}, and deliver \emph{a single
  interpretation\/} of the formulas. 

Specifically, the next lemma shows how the values
$\rho_0,\rho_X,\rho_Y,1/|\mS|$ arise as $K_\mS(\theta)$, for various
$\theta$.
\begin{lemma}\label{lem:rhoX}
For any nontrivial quarter plane model $\mQ(\mS)$ with
$\mS\subset\{0,\pm1\}^2$, the following equalities hold:
\[
\rho_Y^{-1}=A_0(\tau_0)\qquad \rho_X^{-1}=A_{\pi/2}(\tau_{\pi/2}).
\]
\end{lemma}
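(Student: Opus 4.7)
The plan is to prove both identities via a short direct computation. The key observation is that the half-plane inventories at the two extreme angles specialize to one-variable restrictions of $P$: by Lemma~\ref{lem:hpbij}, $\mA(0) = \{j : (i,j) \in \mS\}$, so
\[
\chi_0(u) = \sum_{(i,j) \in \mS} u^j = P(1,u),
\]
and symmetrically $\chi_{\pi/2}(u) = P(u,1)$.

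Next I would substitute the two small-step decompositions. From $P(x,y) = a(x)y + b(x) + c(x)y^{-1}$ we read off $\chi_0(u) = a(1)u + b(1) + c(1)u^{-1}$. When $a(1)$ and $c(1)$ are both positive, $\chi_0'(u) = a(1) - c(1)u^{-2} = 0$ has the unique positive root $\tau_0 = \sqrt{c(1)/a(1)}$, and substituting back gives
\[
\chi_0(\tau_0) = b(1) + 2\sqrt{a(1)c(1)}.
\]
A mirror computation using the alternate decomposition $P(x,y) = \tilde a(y)x + \tilde b(y) + \tilde c(y)x^{-1}$ specialized at $y=1$ produces $\chi_{\pi/2}(\tau_{\pi/2}) = \tilde b(1) + 2\sqrt{\tilde a(1)\tilde c(1)}$. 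Comparing with Equation~\eqref{eqn:rhoX} matches these expressions to the two quantities $\rho_Y^{-1}$ and $\rho_X^{-1}$, which gives the claimed pair of equalities.

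The main obstacle is the degenerate case where one of $a(1), c(1), \tilde a(1), \tilde c(1)$ vanishes, meaning the step set lacks up-, down-, left-, or right-steps. In such cases the relevant $\chi_\theta$ has no interior positive critical point, so $\tau_\theta$ is only defined in the limiting sense as the infimum of $\chi_\theta$ over $(0,\infty)$. I would handle this by observing that in the Fayolle--Raschel table the value $\rho_Y^{-1}$ is invoked only in drift regimes with $\delta_y < 0$, which forces $c(1) > a(1) \geq 0$, and that non-triviality of the quarter plane model together with the small-step hypothesis then forces $a(1) > 0$ in each such case; otherwise every walk would collapse onto the $x$-axis, contradicting the requirement that both boundaries be touched. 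A symmetric argument covers $\rho_X^{-1}$. Under these mild constraints the explicit critical-point formula for $\tau_\theta$ is valid whenever the equality being tested is non-degenerate, and the one-line computation above completes the proof.
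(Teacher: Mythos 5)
Your proof is correct and follows essentially the same route as the paper's: specialize $\chi_0(u)=P(1,u)$ and $\chi_{\pi/2}(u)=P(u,1)$, solve $\chi_0'(\tau_0)=0$ for the resulting Laurent trinomial, and substitute back (your extra care about the degenerate cases $a(1)=0$ etc.\ is a welcome addition the paper omits). One caveat, which the paper's own proof shares: the value you compute for $\chi_0(\tau_0)$, namely $b(1)+2\sqrt{a(1)c(1)}$, is the quantity labelled $\rho_Y^{-1}$ in Equation~\eqref{eqn:rhoX}, whereas the lemma asserts it equals $\rho_X^{-1}$ --- the $X$/$Y$ labels are swapped somewhere between \eqref{eqn:rhoX} and the lemma statement, so rather than writing ``which gives the claimed pair of equalities'' you should make the identification explicit and resolve the labelling.
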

\begin{proof}
This is proved by simply unraveling the notation: 
\begin{equation*}
A_0(u)=\sum_{(i,j)\in\mS} u^j = S(1,u)
\end{equation*}
\[\text{and so}\quad A_0'(u)=[u]S(1,u)-\frac1{u^2}[u^{-1}]S(1,u) \implies
\tau_0=\sqrt{\frac{[u^{-1}]S(1,u)}{[u]S(1,u)}}.\]
Consequently,
\[A_0(\tau_0)=[y^0]S(1,y)+2\sqrt{[y]S(1,y)\cdot
  [y^{-1}]S(1,y)}=\rho_Y^{-1},\] which is precisely the formula for
$\rho_Y^{-1}$ given in Equation~\ref{eqn:rhoX}.
 The $\rho_X$ case is
similar since $A_{\pi/2}(u)=\sum_{(i,j)\in\mS}u^i$.
\end{proof}

We conclude this section with a discussion that the bounds are tight,
i.e.~\[K_\mS = \min_{\theta\in[0,\pi/2]} K_\mS(\theta)\]
for all small step quarter plane models. Recall
that subsequent to the first version of this document being circulated,
 this has been proved by Garbit and Raschel, but we can understand it
combinatorially.
\begin{table}
\begin{tabular}{|c|c||c|c|c|c||c|}\hline
$(\delta_x,\delta_y)$ &$\gamma$& $\alpha$ & $\beta$
&$\ln(\alpha)/\ln(\beta)$&$\tan(\theta^*)$ & $K_\mS$\\\hline\hline
$(+,+)$ && \multirow{3}{*}{$<1$}& \multirow{3}{*}{$<1$} & \multirow{3}{*}{$+$} & \multirow{3}{*}{$\frac{\ln\alpha}{\ln\beta}$} &\multirow{3}{*}{$|\mS|$}\\
$(+,0)$ &&&&&&\\
$(0, +)$&&&&&&\\\hline\hline
$(0,0)$&& $1$& $1$ &  & 1 & $S(\alpha,\beta)=|\mS|$\\\hline\hline
\multirow{3}{*}{$(0,-)$} &$-$&$>1$&\multirow{4}{*}{$>1$}&$+$&$\frac{\ln\alpha}{\ln\beta}$&$S(\alpha,\beta)$\\\cline{2-3}\cline{5-7}
 &$0$&$1$&&$0$&$0$&$S(\alpha, \beta)=\rho_Y^{-1}$\\\cline{2-3}\cline{5-7}
 &$+$&$<1$&&$-$& $0$ &\multirow{2}{*}{$\rho_Y^{-1}$}\\\cline{1-3}\cline{5-6}
$(+,-)$ &&$<1$&&$-$&$0$ &  \\\hline\hline
\multirow{3}{*}{$(-,0)$} &$-$&\multirow{4}{*}{$>1$}&$>1$&$+$&$\frac{\ln\alpha}{\ln\beta}$&$S(\alpha, \beta)$\\\cline{2-2}\cline{4-7}
 &$0$&&$1$&$\infty$&$\infty$&$S(\alpha, \beta)=\rho_X^{-1}$\\\cline{2-2}\cline{4-7}
 &$-$&&$<1$&$-$&$\infty$&\multirow{2}{*}{$\rho_X^{-1}$}\\\cline{1-2}\cline{4-6}
$(-,+)$ &&&$>1$&$-$&$\infty$& \\\hline\hline
$(-,-)$ && $>1$ & $>1$& + & $\frac{\ln\alpha}{\ln\beta}$& $S(\alpha, \beta)$\\\hline
\end{tabular}\\[3mm]
\caption{The value of $K_\mS$ for nonsingular nontrivial quarter plane
  models defined by a finite, step set~$\mS \subset\{\pm1, 0\}^2$. This value can be
  computed either using sign information about the drift~$\delta$ and
  the covariance~$\gamma$, or from information about~$\alpha$ and~$\beta$. 
}
\label{tab:minvals}
\end{table}

\begin{thm}\label{thm:equiv}
  Let $\mS\subseteq\{0,\pm1\}^2$ be a finite set defining a
  nontrivial, nonsingular quarter-plane lattice path model. The growth
  constant~$K_\mS$ for the number $q_n$ of walks of length $n$ in
  $\mQ(\mS)$ satisfies
\begin{equation}
K_\mS=\min_{\theta\in[0,\pi/2]} K_\mS(\theta).
\end{equation}
The location of the minimum is summarized in Table~\ref{tab:minvals}. 
\end{thm}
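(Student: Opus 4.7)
The strategy is a case-by-case verification through the six rows of Remark~\ref{rem:FRtable}. Theorem~\ref{thm:bestbound} already gives the inequality $K_\mS \le \min_{\theta \in [0,\pi/2]} K_\mS(\theta)$, so for each row it suffices to exhibit some $\theta \in [0, \pi/2]$ at which $K_\mS(\theta)$ equals the Fayolle--Raschel value $\rho^{-1}$. Three identifications drive the argument: Lemma~\ref{lem:rhoX} gives $K_\mS(0) = \rho_Y^{-1}$ and $K_\mS(\pi/2) = \rho_X^{-1}$; Lemma~\ref{lem:critpoints} gives the interior identification $K_\mS(\theta^*) = P(\alpha, \beta) = \rho_0^{-1}$ whenever $\theta^* = \arctan(\log\alpha/\log\beta)$ lies in $[0, \pi/2]$; and the fact that $\delta_y \ge 0$ makes $\delta(\mA(0)) \ge 0$, forcing $K_\mS(0) = |\mS|$ via Theorem~\ref{thm:BF-Real}.

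The rows in which FR gives $\rho \in \{1/|\mS|, \rho_Y, \rho_X\}$ then follow immediately by taking $\theta = 0$, $0$, or $\pi/2$, respectively. The substantive rows are those in which FR asserts $K_\mS = \rho_0^{-1}$: the outright negative-drift case $\delta(\mS) = (-, -)$ and the axial cases $\delta(\mS) = (0, -)$ or $(-, 0)$ with $\gamma \le 0$. For each I must verify that the critical point $(\alpha, \beta) \in (0,\infty)^2$ (whose existence is part of the hypothesis of the $\rho_0$ subcases) satisfies $\log\alpha/\log\beta \ge 0$, so that Lemma~\ref{lem:critpoints} places $\theta^*$ inside $[0, \pi/2]$. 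For $\delta = (-, -)$ I plan to show $\alpha, \beta > 1$ by working along the implicit curve $\beta(x)$ defined by $P_y(x, \beta(x)) = 0$: since $\delta_y < 0$, one-dimensional convexity of $P(1, \cdot)$ yields $\beta(1) > 1$, and the function $\phi(x) := P_x(x, \beta(x))$ has derivative equal to the Schur complement $P_{xx} - P_{xy}^2/P_{yy} \ge 0$ (positive semi-definiteness of the log-Hessian of $P$), so $\phi$ is nondecreasing with unique zero $\alpha$; establishing $\phi(1) < 0$ from $\delta_x < 0$ pins down $\alpha > 1$, and a monotonicity argument along $\beta$ then secures $\beta(\alpha) > 1$.

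The main obstacle is the axial cases. Take $\delta = (-, 0)$. Now $\beta(1) = 1$, and implicit differentiation gives $\beta'(1) = -\gamma/P_{yy}(1, 1)$ using the simplification $\gamma = P_{xy}(1, 1)$ when $\delta_y = 0$. The Schur-complement argument still yields $\alpha > 1$, and the sign of $\gamma$ locally determines whether $\beta(x) > 1$ or $< 1$ for $x$ slightly greater than $1$. The delicate step is propagating this local sign information out to $x = \alpha$: concretely, one must show that in the regime $\gamma < 0$ the curve $\beta(x)$ does not re-cross the line $\beta = 1$ inside $(1, \alpha)$. For small steps the equation $P_y(x, 1) = 0$ reduces to $a(x) = c(x)$, a Laurent polynomial in $x$ of low degree with $x = 1$ an automatic root by $\delta_y = 0$; locating the other root relative to $1$ and $\alpha$ and correlating its position with $\gamma$ closes the argument. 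The degenerate boundary case $\gamma = 0$ gives $\beta = 1$ exactly, and Lemma~\ref{lem:critpoints}'s convention $\theta^* = \pi/2$ yields $K_\mS(\pi/2) = P(\alpha, 1) = \rho_X^{-1} = \rho_0^{-1}$, consistent with both FR subentries simultaneously. The axial case $\delta = (0, -)$ is handled by the symmetric argument.
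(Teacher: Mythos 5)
Your overall architecture is the same as the paper's: take the inequality $K_\mS \le \min_\theta K_\mS(\theta)$ from Theorem~\ref{thm:bestbound}, then match the minimum to the Fayolle--Raschel value row by row, which reduces to locating $(\alpha,\beta)$ relative to $(1,1)$ as a function of the signs of $\delta_x,\delta_y,\gamma$ (this is exactly the content of the paper's Table~\ref{tab:location-crit}). The rows resolved by an endpoint $\theta\in\{0,\pi/2\}$ are handled identically in both arguments via Lemma~\ref{lem:rhoX}. Where you diverge is the technical core: the paper exploits the $0/1$ coefficients of the small-step inventory and derives the location of $(\alpha,\beta)$ by elementary arithmetic from $\alpha P_x(\alpha,\beta)\pm\beta P_y(\alpha,\beta)=0$ together with the drift inequalities (plus a non-triviality argument to close one sub-case), whereas you propose an analytic argument along the implicit curve $P_y(x,\beta(x))=0$.

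That analytic argument has genuine gaps as written. First, the monotonicity step is false as stated: $\phi'(x)=P_{xx}-P_{xy}^2/P_{yy}$ is the Schur complement of the Hessian of $P$ in the \emph{original} coordinates, which is not positive semi-definite (e.g.\ $P=xy$ already has indefinite Hessian; for $\mS=\{NE,SW,N\}$ one checks $P_{xx}P_{yy}<P_{xy}^2$ away from $(1,1)$). The convex object is $Q(s,t)=P(e^s,e^t)$, and along $P_y=0$ its Schur complement equals $xP_x+x^2\bigl(P_{xx}-P_{xy}^2/P_{yy}\bigr)$, so the quantity that is provably nondecreasing along the curve is $xP_x(x,\beta(x))$, not $\phi(x)$ itself; the argument is repairable with that substitution, but you must make it. Second, and more seriously, the step ``establishing $\phi(1)<0$ from $\delta_x<0$'' is asserted, not proved: $\phi(1)=P_x(1,\beta(1))$ is evaluated at $\beta(1)>1$, while $\delta_x=P_x(1,1)$, and $P_x(1,y)$ can change sign as $y$ varies, so there is no formal implication. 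Verifying this sign is precisely where the small-step coefficient analysis (the paper's explicit ``Case 2'' computation) is needed, and you have deferred it. Third, in the axial cases you yourself identify the key difficulty --- showing that $\beta(x)$ does not re-cross $\beta=1$ on $(1,\alpha)$ when $\gamma<0$ --- and supply only a plan (``locating the other root \dots closes the argument'') rather than the verification. So the proposal is a plausible alternative route whose load-bearing steps are either incorrect as justified or left unexecuted; completing it would require essentially the same case-by-case coefficient work the paper performs.
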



To prove Theorem~\ref{thm:equiv}, we could directly relate the drift
profile to the value of $\min_{\theta\in[0,\pi/2]} K_\mS(\theta)$, and
show that this matches the values obtained by Fayolle and Raschel.

The different cases are summarized in Table~\ref{tab:minvals}.
It is possible to prove the results in Table~\ref{tab:minvals}
by considering the two equations/inequalities that arise from drift
profiles and follow the implications in a straightforward way to
deduce the sign of both $\ln \alpha$ and $\ln \beta$. Some general
case reductions can simplify some of the work. As the inequality
manipulations are rather tedious, we do not include them here. It is
also possible to simply test the 79 small step cases in order to
verify the result.

\section{Extensions and applications}
\label{sec:extensions}
Our half plane bounding strategy does not rely on the size of the
steps nor the convex cone in which the paths are restricted.  Naive
numerical calculations on examples of quarter plane walks with larger
steps and of three dimensional models, so far tested in the
non-negative octant, suggest the bounds remain tight.

Furthermore, in a recent study of three dimensional
walks~\cite{BoBoKaMe15}, Bostan, Bousquet-M\'elou, Kauers, and Melczer
guessed differential equations satisfied by the generating functions
of some small step models, and using this they were able to conjecture
the exact growth constants.  We verified that, in each of the cases
for which they had data, the growth constant for the model $O(\mS)$,
where $O = \mathbb{R}_{\geq 0}^3$, is equal to the minimal bound.

These observations led us to the following conjecture.

\begin{conjecture}
  Let $\mS\subset \mathbb{Z}^d$ be a finite multiset of steps.  Let
  $K_\mS$ be the growth constant for the enumerative sequence counting
  the number of walks restricted to the first orthant. Let $\mathcal{P}$
  be the set of hyperplanes through the origin in $\mathbb{R}^d$ which
  do not meet the interior of the first orthant.  Given $p\in
  \mathcal{P}$ let $K_{\mS}(p)$ be the growth constant of the walks on
  $\mS$ which are restricted to the side of $p$ which includes the first
  orthant.  Then
  \[
    K_\mS = \min_{p\in\mathcal{P}} K_{\mS}(p).
  \]
\end{conjecture}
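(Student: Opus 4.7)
The upper bound is immediate and carries over verbatim from the two-dimensional argument. For every hyperplane $p\in\mathcal{P}$, the half-space on the side of $p$ containing the first orthant $O$ contains $O$, so any walk on $\mS$ that stays in $O$ also stays in that half-space. Hence the counting sequence for $O(\mS)$ is term-by-term bounded by the counting sequence for the half-space model, giving $K_\mS\leq K_{\mS}(p)$ for every $p\in\mathcal{P}$, and therefore $K_\mS\leq\min_{p\in\mathcal{P}}K_\mS(p)$. The real content of the conjecture is the reverse inequality.

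To compute and understand the minimum, the plan is to parametrize $\mathcal{P}$ by the outward unit normal $\nu$ lying in $S^{d-1}\cap\mathbb{R}_{\geq 0}^d$ (a compact set), generalizing the angular parameter $\theta$ of Section~\ref{sec:halfplane}. Exactly as in Lemma~\ref{lem:hpbij}, the half-space model on $\mS$ with normal $\nu$ is in bijection with the unidimensional model $\mH(\mA(\nu))$ for $\mA(\nu)=\{\langle s,\nu\rangle:s\in\mS\}$, and Theorem~\ref{thm:BF-Real} yields
\[
K_\mS(\nu)=
\begin{cases}
|\mS| & \text{if }\langle\delta(\mS),\nu\rangle\geq 0,\\
\chi_{\mA(\nu)}(\tau_\nu) & \text{otherwise.}
\end{cases}
\]
Extending Lemma~\ref{lem:smooth} by another application of the Implicit Function Theorem to the multivariable setting, the function $\nu\mapsto K_\mS(\nu)$ is continuous on $S^{d-1}\cap\mathbb{R}_{\geq 0}^d$, so a minimum exists by compactness. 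The next step is to locate it by generalizing Lemma~\ref{lem:critpoints}: writing $G(u,\nu)=\sum_{s\in\mS}u^{\langle s,\nu\rangle}$ and stationarizing simultaneously in $u$ and $\nu$ leads to the system $P_{x_k}(\boldsymbol{\alpha})=0$, $k=1,\dots,d$, where $\boldsymbol{\alpha}=(\alpha_1,\dots,\alpha_d)=(u^{\nu_1},\dots,u^{\nu_d})$. When $\mS$ admits a critical point $\boldsymbol{\alpha}\in\mathbb{R}_{>0}^d$ of $P(\mathbf{x})=\sum_{s\in\mS}\mathbf{x}^s$, the computation of Lemma~\ref{lem:critpoints} generalizes to give a $\nu^*$ proportional to $(\log\alpha_1,\dots,\log\alpha_d)$ at which $K_\mS(\nu^*)=P(\boldsymbol{\alpha})$, and this is the minimum whenever $\nu^*\in\mathbb{R}_{\geq 0}^d$. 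When it is not, the minimum is forced onto a proper face of $S^{d-1}\cap\mathbb{R}_{\geq 0}^d$ (some $\nu_k=0$), reducing the optimization to a lower-dimensional problem over a sub-step-set.

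For the lower bound, I would split on whether $P$ has an interior critical point in $\mathbb{R}_{>0}^d$. In that case the minimum equals $P(\boldsymbol{\alpha})$, and a lower bound of this magnitude follows from the higher-dimensional excursion asymptotics of Denisov and Wachtel~\cite{DeWa11}, which give the exponential growth of orthant excursions as $P(\boldsymbol{\alpha})$; since excursions inject into general orthant walks this yields $K_\mS\geq P(\boldsymbol{\alpha})=\min_\nu K_\mS(\nu)$. When no such interior critical point exists, the minimizing $\nu$ lies on a face where some $\nu_k=0$; the corresponding hyperplane contains the $k$-th coordinate axis, so the walks one wishes to lower-bound effectively live in a product of a lower-dimensional orthant and an unconstrained coordinate. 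The plan is to proceed by induction on the dimension: the projection of $\mS$ orthogonal to the $k$-th coordinate defines a $(d-1)$-dimensional orthant model whose growth constant, by induction, matches its minimal half-space bound, and then a shuffle-type argument in the spirit of Lemma~\ref{lem:shuffle} pastes the free $k$-th coordinate back to recover the ambient growth.

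\textbf{Main obstacle.} The principal difficulty is the lower bound in the mixed-drift case, i.e.\ when the minimum occurs on the boundary of the hemisphere. In two dimensions this reduces to the classical fact that a model with, say, $\delta_x>0$ loses the effect of the $y$-axis, and the half-plane bound is attained by walks that escape to infinity along $x$ before worrying about $y\geq 0$. In arbitrary dimension the analogous escape can happen along any face of the orthant, and the intermediate cases (where the minimizing face is neither the full interior nor a single axis) require simultaneously controlling the drift along some coordinates while permitting excursion-like behaviour in the others. Making this construction explicit and uniform across all faces---ideally by inductively combining the excursion lower bound on a ``heavy'' sub-orthant with a half-space lower bound on the complementary coordinates via a shuffle---is where the substantive combinatorial work lies, and is the step most likely to require new ideas beyond the tools already developed in the paper.
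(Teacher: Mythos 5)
First, be aware that this statement is presented in the paper as a \emph{conjecture}: the authors establish only the upper bound $K_\mS\leq\min_{p\in\mathcal{P}}K_\mS(p)$ (by region containment, exactly as in your first paragraph) and offer numerical evidence in dimension three. There is no proof in the paper to compare against, and your submission is explicitly a plan that defers the hard step, so the question is whether the plan would close the gap. As written, it would not.

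Two concrete problems. First, your case split for the lower bound is incorrect: the existence of a critical point $(\alpha_1,\dots,\alpha_d)\in\mathbb{R}_{>0}^d$ of $P$ does \emph{not} imply $\min_\nu K_\mS(\nu)=P(\alpha_1,\dots,\alpha_d)$. That identity holds only when the direction $(\log\alpha_1,\dots,\log\alpha_d)$ lies in the closed nonnegative orthant, the $d$-dimensional analogue of the condition $\log\alpha/\log\beta\geq 0$ in Theorem~\ref{thm:bestbound}. Already for $d=2$ one can have $\alpha<1<\beta$ (see Table~\ref{tab:location-crit}): the critical point exists, the minimizing hyperplane sits at an endpoint, and the excursion growth $P(\alpha,\beta)$ is in general \emph{strictly smaller} than $\min_\theta K_\mS(\theta)$, so it cannot serve as a matching lower bound. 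Second, the boundary-face case is exactly where the paper itself has no combinatorial argument even in two dimensions: tightness there is obtained by comparison with the analytic results of Fayolle and Raschel (Theorem~\ref{thm:equiv}), and the elementary lower-bound toolkit of Section~\ref{sec:lowerbound} covers only 73 of the 79 small-step models. Your proposed induction also mis-describes the geometry: if the minimizing hyperplane contains the $k$-th coordinate axis ($\nu_k=0$), the orthant model still imposes $x_k\geq 0$; that constraint does not vanish, and Lemma~\ref{lem:shuffle} concerns unions of step sets, not the relaxation of a coordinate constraint, so it cannot ``paste the free coordinate back.'' You have correctly located where a new idea is needed, but one branch of your plan rests on a false reduction and the other on a lemma that does not apply.
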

Note that in all dimensions, $K_{\mS}(p)$ can be computed by
projecting the steps onto the normal to $p$, and enumerating the
resulting unidimensional model.  Thus, if true, our conjecture would
give an elementary way to understand and compute the growth constant
of any class of first orthant restricted walks.

The work of Garbit and Raschel translates this into a probabilistic
context, and in particular they have proved for it in Corollary 9
of~\cite{GaRa16} under the conditions on models which essentially
avoid singular, and trivial models.

Our half plane interpretation has important implications for random
generation.  A naive rejection strategy to generate quarter plane
walks might first generate walks in the whole plane, and reject them
as they leave the quarter plane. Models with a $(-,-)$ drift profile
perform rather poorly in this scheme. However, when the exponential
growth rate of a class of quarter plane walks is the same as some
class of half plane walks which contain it, the following rejection
scheme is provably efficient. The half plane walks with rational slope
form an algebraic language, and are easily generated. Rejecting walks
from this class when they leave the quarter-plane is remarkably
efficient. There are additional details required when the slope is not
rational, and the theory is developed by Lumbroso, Mishna and
Ponty~\cite{LuMiPo16}.

Approximations such as we compute here can aid other direct
strategies. For example, the strategy of diagonals used by Melczer,
Mishna and Wilson relies on determining a set of critical points to
set up the intergral computations. Having a tight bound on the
exponential growth in hand is useful in this process, as soem
candidates can be eliminated immediately.

\section{Acknowledgments}
We are grateful to Andrew Rechnitzer for some initial discussions on
using the upper half plane as an upper bound, in addition to useful
comments from Kilian Raschel, Michael Renardy, Philippe Biane, and
Jean-Fran\c cois Marckert and J\'er\'emie Lumbroso. MM is also very
grateful to LaBRI, Universit\'e de Bordeaux, which acted as a host
institution during the completion of this work.  

\bibliographystyle{plain}
\bibliography{JoMiYe}

\end{document}